\def\cH{{\mathcal H}}
\def\nn{\nonumber}
\def\a{\alpha}   \def\D{\Delta}
\def\e{\varepsilon} \def\f{\phi}   
  \def\k{\kappa}
  \def\n{\nu} \def\p{\pi}
 \def\om{\omega}
\newcommand{\upp}[1]{\langle#1\rangle}
\newtheorem{theorem}{Theorem}
\newtheorem{lemma}[theorem]{Lemma}
\newtheorem{corollary}[theorem]{Corollary}
\newtheorem{Proposition}{Proposition}
\newcommand{\rdup}[1]{{\left\lceil #1\right\rceil }}
\newcommand{\brac}[1]{\left(#1\right)}
\newcommand{\bfrac}[2]{\left(\frac{#1}{#2}\right)}
\newcommand{\set}[1]{\left\{#1\right\}}
\def\E{\mathbb{E}}
\def\Var{\mathbb{V}ar}
\def\Pr{\mathbb{P}}
\newcommand{\ignore}[1]{}
\newcommand{\THM}[2]{\
\begin{theorem}\label{#1}#2
\end{theorem}
}
\def\cH{{\mathcal H}}
\def\cR{{\mathcal R}}
\def\cS{{\mathcal S}}
\newcommand{\beq}[2]{\begin{equation}\label{#1}#2\end{equation}}
\newcommand{\mults}[1]{\begin{multline*}#1\end{multline*}}
\def\nn{\nonumber}
\begin{document}
\author{Tolson Bell\thanks{Email: thbell@cmu.edu. Research supported by NSF Graduate Research Fellowship grant DGE2140739 and DMS1952285.}~~and Alan Frieze\thanks{Email: frieze@cmu.edu. Research supported in part by NSF grant DMS1952285.}\\Department of Mathematical Sciences\\Carnegie Mellon University\\Pittsburgh PA 15213}
\title{Rainbow powers of a Hamilton cycle in $G_{n,p}$}
\maketitle
\begin{abstract}
We show that the threshold for having a rainbow copy of a power of a Hamilton cycle in a randomly edge colored copy of $G_{n,p}$ is within a constant factor of the uncolored threshold. Our proof requires $(1+\e)$ times the minimum number of colors.
\end{abstract}
\section{Introduction}
There has recently been great progress in our understanding of thresholds for monotone properties in the random graph $G_{n,p}$. Inspired by the work of Alweiss, Lovett, Wu and Zhang \cite{ALWZ} on the Sunflower Conjecture, Frankston, Kahn, Narayanan and Park \cite{FKNP2020} showed that under fairly general conditions, the threshold for the existence of combinatorial objects is within a factor $O(\log n)$ of the point where the expected number of such objects begins to take off. Great though these results are, this is not the end of the story. In a paper remarkable for the strength of its result and for the simplicity of its proof, Park and Pham\cite{PP22} proved the so-called Kahn-Kalai conjecture \cite{KK} which implies the result of \cite{FKNP2020}.

Kahn, Narayanan and Park \cite{KNP2020} tightened their analysis for the case of the square of a Hamilton cycle, removing the $O(\log n)$ factor and solving the existence problem up to a constant factor; a remarkable achievement, given the complexity of the proofs of earlier weaker results. Their result was generalized by Espuny D\'iaz and Person \cite{Esp} and Spiro \cite{Spiro}, both of whom defined more generalized conditions under which the $O(\log n)$ factor can be removed. Espuny D\'iaz and Person asked whether a rainbow generalization of their result could be proven \cite{Esp}. Our main theorem here proves a rainbow version in a setting that is more general than the Kahn--Narayanan--Park result but less general than Espuny D\'iaz--Person and Spiro results. It is likely that our result could be extended to the full generality of the Espuny D\'iaz--Person and Spiro results with some additional effort.

\paragraph{Some notation} Given a set $X$ and $0\leq p\leq 1$, we let $X_p$ denote a subset of $X$ where each $x\in X$ is placed independently into $X_p$ with probability $p$. Similarly, $X_m$ is a random $m$-subset of $X$ for $1\leq m\leq |X|$.

Let $\cH=\set{A_1,A_2,\ldots,A_M}$ be a hypergraph on vertex set $X$.
A key notion in this analysis is that of {\em spread}. For a set $S\subseteq X$ we let $\upp{S}=\set{T:\;S\subseteq T\subseteq X}$ denote the subsets of $X$ that contain $S$.  We say that $\cH$ is $\k$-spread if
\beq{spread}{
|\cH\cap \upp{S}|\leq \frac{|\cH|}{\k^{|S|}},\quad\forall S\subseteq X.
}
$\cH$ is called $r$-bounded if $|A|\leq r$ for all $A\in\cH$ and $r$-uniform if $|A|=r$ for all $A\in\cH$. The following theorem was proved in \cite{FKNP2020}:
\THM{T2020}{
Let $\cH$ be an $r$-bounded, $\k$-spread hypergraph and let $X=V(\cH)$. There is an absolute constant $K>0$ such that if 
\beq{mbound}{
p\geq\frac{K\log r}{\k}\text{ or respectively }m\geq\frac{(K\log r)|X|}{\k}
}
then w.h.p. $X_p$ or $X_m$ respectively contains an edge of $\cH$. More precisely, $\Pr(X_p\text{ contains an edge of $\cH$ })\geq 1-\e_r$ where $\e_r\to0$ as $r\to\infty$.
}
To apply this theorem to, say, Hamilton cycles, we let $X=\binom{[n]}{2}$ and we let $A_i,i=1,2,\ldots,\tfrac12(n-1)!$ be the edge sets of the Hamilton cycles of $K_n$.

In the special case of $\cH$ corresponding in this way to the squares of Hamilton cycles, \cite{KNP2020} removed the $\log r$-factor from the bounds in \eqref{mbound}.

We now turn to the main topic of this note. We suppose that each $x\in X$ is {uniformly and independently} given a random color from a set $Q$. Given a set $A\subseteq X$ we refer to $A^*$ as the set after its elements have been colored. We say that $A^*$ is {\em rainbow} colored if each $a\in A$ has a different color. Bell, Frieze and Marbach \cite{BFM} attempted to extend the results of \cite{FKNP2020} to rainbow colorings. They proved
\THM{thrainbow}{
Let $\cH$ be an $r$-bounded, $\k$-spread hypergraph and let $X=V(\cH)$ be randomly colored from $Q=[q]$ where $q\ge r$. Suppose also that $\k=\Omega(r)$, that is, there exists a constant $C_0>0$ such that $\kappa\ge C_0r$ for all valid $r$. Then given $\e>0$ there is a constant $C_\e$ such that if $r$ is sufficiently large and
\beq{mbounds}{
m\geq\frac{(C_\e\log_2r)|X|}{\k}
}
then $X_m$ contains a rainbow colored edge of $\cH$ with probability at least $1-\e$.  
}
The constraint $\k=\Omega(r)$ rules out the square of a Hamilton cycle as there we have $r=2n$ and $\k=O(n^{1/2})$. The aim of this note is to tackle this case while also removing the extra $\log r$-factor. {Unfortunately, we have to increase the number of colors slightly, by a factor $(1+\e_1)$ for arbitrary positive $\e_1$. Chapter 15 of \cite{FK} extracts a property used in  \cite{KNP2020}} to make the following extra assumption about the hypergraph $\cH$. For $A\in\cH$ we let
\[
f_{t,A}=|\set{B\in \cH:|B\cap A|=t}|.
\]
The assumption now is that there exist constants $0<\a<1,K_0$ independent of $r$ such that
\beq{p1}{
f_{t,A}\leq \bfrac{K_0}{\k}^t|\cH|\text{ for all $A\in\cH$ and }1\leq t\leq \a r.
}
As $\cH$ remains $\kappa$-spread, it follows from \eqref{spread} that
\beq{spreadf}{
f_{t,A}\le\frac{2^r}{\k^t}|\cH|\text{ for all $A\in\cH$ and }t>\a r
}
Let a hypergraph $\cH$ be {\em edge transitive} if for every pair of edges $A_i,A_j$ there exists a permutation $\p:X\to X$ such that $\p(A_i)=A_j$ and such that $\p(A)\in \cH$ for all $A\in \cH$. The induced map $\p:\cH\to\cH$ is a bijection. (When $\cH$ is defined by the edges of $K_n$, all we usually require is a permutation of the vertices.)

We will prove the following:
\THM{power}{
Let $\e,\e_1>0$ be arbitrary positive constants. Suppose that $\cH$ is a $\k$-spread, $r$-uniform and edge transitive hypergraph on which \eqref{p1} holds. Let $X=V(\cH)$ be randomly colored from $Q=[q]$ where $q\geq (1+\e_1)r$. Then there exists $C=C(\e,\e_1)$ {such that for sufficiently large $r,\k$,}
\beq{strictly}{
m\geq \frac{C|X|}{\k} \text{ implies that }{\Pr(X_{m}^*\text{ contains a rainbow colored edge of $\cH$}})\geq 1-\e.
}
}
We will show in Section \ref{appen} that hypergraphs corresponding to powers of Hamilton cycles fit the premise of Theorem \ref{power}. (\cite{KNP2020} verified \eqref{p1} for squares of Hamilton cycles and for completeness, we verify \eqref{p1} for all powers.) We prove Theorem \ref{power} in the next section. We note that our proof is in some part inspired by a proof by Huy Pham \cite{HP} of the main result of \cite{KNP2020}.
\section{Proof of Theorem \ref{power}}
{The proof will proceed in three stages. First, we will color all elements of $X$ independently and uniformly at random from $[q]$, and will remove all sets in $\cH$ that are not rainbow. We show that the number of remaining sets is with high probability close to its expectation.}

Then, let $N=|X|$ and $m=\frac{CN}{\k}$ for sufficiently large $C=C(\e,\e_1)$.  Let $W_0$ be chosen randomly from $\binom{X}{m}$. Let $p_1=\frac{m}{N}$ and let $W_1$ be obtained from $X\setminus W_0$ by including each element with probability $p_1$. Proving Theorem \ref{power} on $W_0\cup W_1$ suffices to prove it for $X_{O(m)}$ by standard concentration bounds. {The second stage (succeeding with high probability) will deal with $W_0$ while the third stage (succeeding with probability $1- \e$) will deal with $W_1$.}

We will use the notation $A\lesssim B$ to indicate that $A\leq (1+o(1))B$ as $r\to \infty$. We will also assume that $q=(1+\e_1)r$. This assumption comes without loss of generality because $C(\e,\e_1)$ will be strictly decreasing in $\e_1$, so if $q>(1+\e_1)r$, we could set $\e_2$ such that $q=(1+\e_2)r$ and use $\e_2$ in the proof instead.
\subsection{The size of $\cH^*$}
Let $\cH=\set{A_1,A_2,\ldots,A_M}$ and let $\cH^*$ denote the rainbow edges of $\cH$ after a uniform and independent random coloring. Similarly, let $X^*$ denote $X$ after it has been randomly colored. Let $(a)_b=a(a-1)\cdots(a-b+1)$ for positive integers $a,b$.

We use the Chebyshev inequality to prove concentration of $Z=|\cH^*|$ around its mean. We have
\[
\E(Z)=\frac{|\cH|(q)_r}{q^r}\to\infty.
\]
{as $r\to\infty$, because spread (with $S\in\cH$ in \eqref{spread}) implies that $|\cH|\geq \k^r$ and we have assumed that $\k$ is sufficiently large.}

Using the edge transitivity of $\cH$ to obtain \eqref{esy},
\begin{align}
\E(Z^2)&=\sum_{t=0}^r\sum_{A_i,A_j:|A_i\cap A_j|=t} \frac{(q)_t((q-t)_{r-t})^2}{q^{2r-t}}\nn\\
&\leq\E(Z)\brac{1+\E(Z)+\sum_{t=1}^{r-1}\sum_{A_i:|A_i\cap A_1|=t}\frac{(q-t)_{r-t}}{q^{r-t}}}\label{esy}\\
&\leq \E(Z)\brac{1+\E(Z)+|\cH|\brac{\sum_{t=1}^{\a r}\bfrac{K_0}{\k}^t\frac{(q-t)_{r-t}}{q^{r-t}}+\sum_{t=\a r+1}^{r-1}\frac{\binom{r}{t}}{\k^t}\frac{(q-t)_{r-t}}{q^{r-t}}}}.\label{fsum}
\end{align}
{\bf Explanation for \eqref{fsum}:} For the first sum we use \eqref{p1} on $A_i$ and for the second sum we use spread by summing over all $\binom{r}{t}$ $t$-subsets of $A_i$.

So,
\[
\frac{\E(Z^2)}{\E(Z)^2}\leq \frac{1}{\E(Z)}+1+\sum_{t=1}^{\a r}\bfrac{K_0}{\k}^t\frac{(q-t)_{r-t}q^r}{q^{r-t}(q)_r}+\sum_{t=\a r+1}^{r-1}\frac{2^r}{\k^t}\frac{(q-t)_{r-t}q^r}{q^{r-t}(q)_r}=1+o(1)
\]
as long as $\kappa,r\to\infty$. It follows that w.h.p.
\beq{cA}{
|\cH^*|\sim \frac{|\cH|(q)_r}{q^r}.
}
Thus, for the rest of the proof we will assume $|\cH^*|\ge (1-o(1))\frac{|\cH|(q)_r}{q^r}$.
\subsection{Random sample from $X$}
Given a set $A^*\in\cH^*$, we define
\[
f^*_{t,A^*}=|\set{B^*\in \cH^*:|B^*\cap A^*|=t}|
\]
so that for $1\le t\le\alpha r$, we have by \eqref{p1} that
\beq{expf}{
\E(f^*_{t,A^*})= \frac{(q-t)_{r-t}}{q^{r-t}}f_{t,A}\leq \frac{(q-t)_{r-t}}{q^{r-t}}\bfrac{K_0}{\k}^t|\cH|.
}
and for $t>\alpha r$, we have by \eqref{spreadf} that
\beq{expfbig}{
\E(f_{t,A^*}^*)=\frac{(q-t)_{r-t}}{q^{r-t}}f_{t,A}\le\frac{(q-t)_{r-t}}{q^{r-t}}\frac{2^r}{\k^t}|\cH|
}

For $A^*\in\cH^*$ and $W_0^*\subseteq X^*$ with $|W_0^*|=m$, let $T^*=T^*(A^*,W_0^*)$ be $B^*\setminus W_0^*$ for some $B^*\in\cH^*,B^*\subseteq A^*\cup W_0^*$ that minimizes $|B^*\setminus W_0^*|$.

Let $\om\to\infty,\om=o(r^{1/2})$. For $A^*\in\cH^*$ we say that $(A^*,W_0^*)$ is {\em bad} if $|T^*(A^*,W_0^*)|\ge\om$. Otherwise $(A^*,W_0^*)$ is {\em good}. Let $W_0^*$ be a {\em success} if $|\set{A^*\in\cH^*:(A^*,W_0^*)\text{ is bad}}|\leq |\cH^*|/2$, that is, if the majority of sets in $\cH^*$ have a relatively small $T^*$.
\begin{lemma}\label{clt2020a}
$\Pr(\text{success})\geq 1-c_0^\om$ for some constant $0<c_0<1$.
\end{lemma}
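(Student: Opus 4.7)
My plan is to reduce Lemma~\ref{clt2020a} to a per-set bound via edge transitivity and Markov, then establish the per-set bound by a second-moment argument. By linearity and edge transitivity,
\[
\E\bigl[|\{A^*\in\cH^*:(A^*,W_0^*)\text{ bad}\}|\bigr]=\E[|\cH^*|]\cdot\Pr\bigl[(A_0^*,W_0^*)\text{ bad}\,\big|\,A_0\in\cH^*\bigr]
\]
for any fixed $A_0\in\cH$. Combined with the concentration $|\cH^*|\geq(1-o(1))\E[|\cH^*|]$ from \eqref{cA} and Markov's inequality, it suffices to show $\Pr[(A_0^*,W_0^*)\text{ bad}\mid A_0\in\cH^*]\leq c_0^\om/4$ for some $c_0\in(0,1)$, with $c_0$ chosen so that the $o(1)$ loss coming from \eqref{cA} is itself absorbed into the same geometric rate.

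To bound the conditional probability, fix $A\in\cH$, condition on $A\in\cH^*$, and define
\[
Y=\bigl|\{B\in\cH^*:|A\cap B|<\om\text{ and }B\setminus A\subseteq W_0\}\bigr|.
\]
Any such $B$ is a witness that $(A,W_0)$ is good, since $B\subseteq A\cup W_0$ and $|B\setminus W_0|\leq|B\cap A|<\om$. Decomposing by $t=|A\cap B|$ and using $\Pr[B\setminus A\subseteq W_0]\sim(C/\k)^{r-t}$, bounds \eqref{p1} and \eqref{spreadf} yield $\sum_{t\geq 1}f_{t,A}=O(|\cH|/\k)$, hence $f_{0,A}\geq(1-o(1))|\cH|$ for $\k$ large. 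Since $|\cH|\geq\k^r$ by \eqref{spread} applied with $S=A$, retaining only the $t=0$ term gives
\[
\E[Y\mid A\in\cH^*]\geq(1-o(1))\,C^r\,\tfrac{(q)_r}{q^r}\geq C_1^r
\]
for some $C_1>1$, provided $C=C(\e,\e_1)$ is chosen large enough.

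For the second moment I expand $\E[Y^2\mid A\in\cH^*]$ over ordered pairs $(B_1,B_2)$, parametrised by $t_i=|A\cap B_i|<\om$, $s=|B_1\cap B_2|$, and $u=|A\cap B_1\cap B_2|$. The joint $W_0$-probability is $(C/\k)^{2r-t_1-t_2-s+u}(1+o(1))$ and the joint rainbow factor is bounded exactly as in \eqref{esy}. Splitting by whether each of $t_1,t_2,s$ lies below or above $\a r$ and invoking \eqref{p1} or \eqref{spreadf} on the corresponding pair counts, the ``diagonal'' contribution ($s=u=0$, $B_1$ and $B_2$ essentially disjoint) reproduces $\E[Y]^2$, while every other contribution is damped by a geometric factor in $1/\k$---mirroring the derivation of \eqref{fsum}. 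This gives $\E[Y^2]\leq(1+o(1))\E[Y]^2$, and Chebyshev then yields $\Pr[Y=0\mid A\in\cH^*]\leq o(1)+C_1^{-r}\leq c_0^\om/4$ for a suitable $c_0<1$, using $\om=o(r^{1/2})\leq r$.

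The main technical work lies in the second moment: controlling the pair counts at a fixed overlap pattern $(t_1,t_2,s,u)$ requires a double application of \eqref{p1}/\eqref{spreadf}---once via the $A$-intersections and once via the $B_1$-intersection, where edge transitivity converts $f_{s,B_1}$ into $f_{s,A}$---and then merging all overlap regimes into a single $o(1)$ bound. The techniques mirror those of the variance estimate in Section~2.1, but the extra intersection index makes the bookkeeping more intricate.
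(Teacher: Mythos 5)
Your overall strategy---reduce by edge transitivity and Markov to bounding the per-edge failure probability, then use a second moment on a witness count $Y$---is a genuinely different route from the paper, but it has a quantitative gap that the stated lemma cannot survive.

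The paper bounds $\E(\n_{bad})$ directly by the fragment-counting inequality \eqref{badpairs1}: it introduces a selection function $\f:2^{X^*}\to\cH^*$, reconstructs each bad pair $(A^*,W_0^*)$ from $Z^*=T^*\cup W_0^*$ and $t'=|\f(Z^*)\cap A^*|\ge t\ge\om$, and sums $2^{t'}f^*_{t',\f(Z^*)}$ over $t'\ge t\ge\om$. The restriction to $t\ge\om$ is exactly what produces the factor $c^\om$, because each unit increase in $t$ costs a factor of roughly $\k/C$ from $\binom{N}{m+t}/\binom{N}{m}$ against a factor of roughly $K_0/\k$ from the spread bound, netting a geometric damping $(K_0/C)^{t}$ summed over $t\ge\om$. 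Your argument never isolates the \emph{minimality} of $T^*$, which is what the $\f(Z^*)$ device exploits; you only use that a good pair has \emph{some} small witness.

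The concrete problem is the claim $\Pr[Y=0\mid A\in\cH^*]\le o(1)+C_1^{-r}$. Chebyshev gives $\Pr[Y=0]\le\Var(Y)/\E[Y]^2$, and the off-diagonal variance is \emph{not} $o(1)$: take $B_1,B_2$ with $t_1=t_2=0$, $u=0$, $s=|B_1\cap B_2|=1$. There are $\ge\Omega(1)\cdot|\cH|\cdot(1/\k)\cdot|\cH|$ such pairs (spread is an upper bound, but these pairs really exist for $\cH$ like powers of Hamilton cycles), and for each the $W_0$-event probability exceeds the independent product by a factor $p_1^{-1}=\k/C$. Combined with the spread factor $K_0/\k$, the $s=1$ stratum contributes $\Theta(K_0/C)\cdot\E[Y]^2$ to $\Var(Y)$, which is a constant, not $o(1)$ as $r\to\infty$, and certainly not $O(c_0^\om)$. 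Your claim that every off-diagonal contribution is ``damped by a geometric factor in $1/\k$'' forgets the compensating $p_1^{-(s-u)}=(\k/C)^{s-u}$ from the fixed-size sampling; the net rate is $K_0/C$, not $1/\k$. So your argument at best gives $\Pr(\text{failure})=O(K_0/C)$, which (while possibly adequate for the final $1-\e$ conclusion of Theorem~\ref{power} if $C$ is taken large) does not establish the stated bound $1-c_0^\om$ and does not recover the paper's lemma. To get the geometric-in-$\om$ rate you need the paper's minimal-fragment encoding, not a per-edge second moment.
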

\begin{proof}
{Let $\n_{bad}$ denote the number of bad pairs $(A^*,W_0^*)$. Fix a function $\f:2^{X^*}\rightarrow\cH^*$, where $\f(S^*)\subseteq S^*$ whenever $S^*$ contains a set in $\cH^*$. We claim that}
\begin{align}
\n_{bad}\leq \sum_{t\geq \om}\sum_{|Z^*|=m+t}\sum_{t'\geq t}2^{t'} f^*_{t',\f(Z^*)}.\label{badpairs1}
\end{align}
{\bf Explanation for \eqref{badpairs1}:} {This equation follows from the key observation of recent threshold papers \cite{KNP2020,PP22}. We count the number of $(A^*,W_0^*)$ with $|T^*(A^*,W_0^*)|=t$ for a given $t\ge\om$. We first fix $Z^*=T^*\cup W_0^*$, which as these are disjoint has size $m+t$. Then, we let $t'=|\f(Z^*)\cap A^*|$, noting that $\f(Z^*)\subseteq Z^*$ as $Z^*$ does contain a set in $\cH^*$. Since $T^*\subseteq Z^*$ is chosen to minimize $|B^*\setminus W_0^*|,B^*\in \cH^*,B^*\subseteq A^*\cup W_0^*$, and $\f(Z^*)$ is a valid choice of $B^*$, we must have $T^*\subseteq \f(Z^*)\cap A^*$, and so $t'=|\f(Z^*)\cap A^*|\geq t$. Given $t'$, we can specify one of the at most $f^*_{t',\f(Z^*)}$ possibilities for $A^*$ as a superset of $\f(Z^*)\cap A^*$. We then specify $T^*\subseteq \f(Z^*)\cap A^*$ in at most $2^{t'}$ ways, which uniquely gives $W_0^*=Z^*\setminus T^*$.}

By linearity of expectation and Equations \eqref{expf}, \eqref{expfbig}, and \eqref{badpairs1}, we get
\begin{align}
\E(\n_{bad})&\leq {\sum_{t\geq \om}}\sum_{|Z^*|=m+t} \brac{\sum_{t'=t}^{\a r}\frac{(q-t')_{r-t'}}{q^{r-t'}} \bfrac{2K_0}{\k}^{t'}|\cH|+ \sum_{t'>\a r}\frac{(q-t')_{r-t'}} {q^{r-t'}}\frac{2^{r+t'}}{\k^{t'}}|\cH|}\\
&\leq  (1+o(1))\sum_{t\geq \om}\sum_{|Z^*|=m+t} \brac{\sum_{t'=t}^{\a r}\frac{(q-t')_{r-t'}} {q^{r-t'}}\bfrac{2K_0}{\k}^{t'}\frac{q^r|\cH^*|}{(q)_r}+ \sum_{t'>\a r} \frac{(q-t')_{r-t'}}{q^{r-t'}}\frac{2^{r+t'}}{\k^{t'}}\frac{q^r|\cH^*|}{(q)_r}}\nn\\
&\leq (1+o(1))\sum_{t\geq \om}\sum_{|Z^*|=m+t} \brac{\sum_{t'=t}^{\a r} \bfrac{2eK_0}{\k}^{t'}|\cH^*|+ \sum_{t'>\a r} \frac{e^{t'}2^{r+t'}}{\k^{t'}}|\cH^*|}. \label{badpairs4}
\end{align}

Continuing, and using \eqref{badpairs4}, 
\begin{align*}
(1-o(1))~\E(\n_{bad})&\leq\sum_{t\geq \om}\binom{N}{m+t}  \brac{\sum_{t'=t}^{\a r}\bfrac{2eK_0}{\k}^{t'}|\cH^*|+\sum_{t'>\a r} \frac{2^{r+t'}e^{t'}}{\k^{t'}}|\cH^*|}\\
&\leq\binom{N}{m}|\cH^*| \sum_{t\geq \om}\bfrac{\k}{C}^t \brac{\sum_{t'=t}^{\a r}\bfrac{2eK_0}{\k}^{t'}+ \sum_{t'>\a r}\frac{2^{r+t'}{e^{t'}}}{\k^{t'}}}\\
&\leq\binom{N}{m}|\cH^*|c^\om\text{ for some }0<c<1.
\end{align*}
{Now, let $w_{bad}=|\set{W_0^*:\text{there are at least $|\cH^*|/2$ bad $(A^*,W_0^*)$}}|$. Then the above equation gives that
\[
(1-o(1))~\E(w_{bad})\leq 2\binom{N}{m}c^\om
\]
and thus
\[
\Pr(failure)=\frac{\E(w_{bad})}{ \binom{N}{m}}\leq (1+o(1))~2c^\om.
\]}
\end{proof}
{By taking $\om\to\infty$ as $r\to\infty$, this means that success will happen with high probability.}
\subsection{Finishing the proof}\label{fin}
Suppose now that $W_0^*$ is a success and then let $\cR^*$ denote the multi-{hypergraph} 
\[
\set{{T^*(A^*,W_0^*)} :A^*\in\cH^*,(A^*,W_0^*)\text{ is good}}
\]
 where each good $(A^*,W_0^*)$ contributes one element. Then let 
\[
\cR_\ell^*=\set{R^*\in \cR^*:|R^*|=\ell}\text{ for }0\leq \ell< \om.
\] 
We can assume that $\cR_0^*=\emptyset$, as otherwise $W_0^*$ contains an edge of $\cH^*$ and we have already succeeded. Now, generate $W^*=W_0^*\cup W_1^*$ where $W_1^*$ is distributed as $(X^*\setminus W_0^*)_{p_1}$. If $R^*\subseteq W_1^*$ for some $R^*\in \cR^*$, then the $B^*\in\cH^*$ for which $R^*=B^*\setminus W_0^*$ satisfies Theorem \ref{power}. Thus, we just need to show that with probability at least $1- \e$ there exists such an $R^*\subseteq W_1^*$.
 
To aid in the calculations below, for each $R^*\in\cR_{\ell}^*$ with $R^*\subseteq W_1^*$, take $A(R^*)$ to be an independent random variable with distribution Bernoulli$((\e_1p_1)^{\omega-\ell})$. $R^*\in\cR^*$ is {\em accepted} if $R^*\subseteq W_1^*$ and $A(R^*)=1$. Let $\n_R$ denote the number of accepted sets. It suffices to show $\Pr(\n_R=0)\le \e$, which we will do by Chebyshev's inequality. Then
\beq{imom}{
\E(\n_R)=\sum_{\ell=1}^\om|\cR^*_\ell|\frac{p_1^\ell(q-r+\ell)_\ell}{q^\ell}(\e_1p_1)^{\om-\ell}\sim |\cR^*|(\e_1p_1)^\om\to\infty.
}
{The claims in \eqref{imom} follow from the fact that
\beq{assum}{{
 \text{$\om=o(r^{1/2})$ and the fact that $|\cR^*|\geq \tfrac12|\cH^*|\gtrsim \tfrac12 e^{-r}|\cH|\geq \tfrac12(\k/e)^r$.}
}
}}
Now
\begin{align}
\Var(\n_R)&\leq \sum_{t=1}^\om\sum_{\ell_1,\ell_2=1}^\om\E\brac{|\set{(R^*,S^*):R^*\in\cR_{\ell_1}^*,S^*\in\cS_{\ell_2}^*,|R^*\cap S^*|=t}|}\times\nn\\
&\hspace{1in}\frac{p_1^{\ell_1}(q-r+\ell_1)_{\ell_1}}{q^{\ell_1}}\frac{p_1^{\ell_2-t}(q-r+\ell_2-t)_{\ell_2-t}}{q^{\ell_2-t}}\cdot(\e_1p_1)^{2\om-\ell_1-\ell_2}\nn\\
&\sim \sum_{t=1}^\om\E\brac{|\set{R^*,S^*\in \cR^*:|R^*\cap S^*|=t}|} (\e_1p_1)^{2\om-t}.\label{2mom}
\end{align}
{(The same assumptions \eqref{assum} suffice to obtain \eqref{2mom}.)

Fix $R^*\in \cR^*$  and then for $1\leq t\leq \om$,
\begin{align}
\E(|\set{S^*\in\cR^*:|R^*\cap S^*|=t}|)&\leq \sum_{s=t}^r\bfrac{K_0}{\k}^s\frac{(q)_{r-s}}{q^{r-s}}|\cH|\label{n1}\\
&\leq \bfrac{K_0}{\k}^t\frac{(q)_{r-t}}{q^{r-t}}|\cH|\sum_{s=t}^r\bfrac{(1+\e_1)K_0}{\e_1\k}^{s-t}\nn\\
&\leq  2 \bfrac{K_0}{\k}^t\frac{(q)_{r-t}}{q^{r-t}}|\cH|\lesssim 2\bfrac{K_0}{\k}^t\frac{(q)_{r-t}}{q^{r-t}}\frac{q^r}{(q)_r}|\cH^*|\leq 2\bfrac{eK_0}{\k}^t|\cH^*|.\nn
\end{align}
{\bf Explanation for \eqref{n1}:}
$R^*$ appears several times in $\cR^*$ as $A^*\setminus W_0^*$ for some $A^*\in\cH^*$. For each such $A^*$ we count the number of sets $B^*\in\cH^*$ for which $s=|B^*\cap A^*|\geq t$. This will bound the number of choices for $S^*$ in the LHS of \eqref{n1}. For the sum we use \eqref{expfbig} which is only valid for $t\leq \a r$. For larger $t$, we proceed as in \eqref{fsum} and $K_0^t$ by $\binom{r}{t}\leq (e/\a)^t$ and assume that $K_0\geq e/\a$.
}

So 
\mults{
\Var(\n_R)\lesssim 2|\cH^*||\cR^*|\sum_{t=1}^\om\bfrac{eK_0}{\k}^t(\e_1p_1)^{2\om-t}\\
\leq4|\cR^*|^2(\e_1p_1)^{2\om}\sum_{t=1}^\om \bfrac{eK_0}{\e_1\k p_1}^t\leq4|\cR^*|^2 (\e_1p_1)^{2\om}\sum_{t=1}^\om \bfrac{eK_0}{\e_1C}^{t}\leq \frac{12K_0}{\e_1C}\E(\n_R)^2.
}
(We have used $\k p_1=\k m/N=C$ and {$C\gg K_0$} to get the third inequality.)

The Chebyshev inequality implies that
\[
\Pr(\n_R=0)\leq \frac{\Var(\n_R)}{\E(\n_R)^2}\lesssim \frac{12K_0}{\e_1C}.
\]
Taking $C(\e,\e_1)\ge\frac{13K_0}{\e\e_1}$ then verifies \eqref{strictly}. (We use $\E(\n_R)\to\infty$ to justify the final conclusion.)
\section{Powers of Hamilton cycles}\label{appen}
We verify \eqref{p1} for the hypergraph $\cH$ whose edges correspond to the $k$th power of a Hamilton cycle. As in \cite{KNP2020} we split this into two propositions and modify their proof for $k=2$.
\begin{Proposition}\label{prop1}
For $T\subseteq \binom{[n]}{2}$, with $t\leq n/3k$ edges, inducing $c$ components,
\[
|\cH\cap\upp{T}|\leq (2k)^{2t}\brac{n-\rdup{\frac{t+(2k-1)c}{k}}+c-1}!.
\]
\end{Proposition}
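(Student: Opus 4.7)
The plan is to bound $|\cH\cap\upp{T}|$ by counting cyclic orderings $\sigma$ of $[n]$ whose $k$-th power contains $T$, noting that each element of $\cH\cap\upp{T}$ arises from at most $2n$ cyclic orderings (rotations and reflections), so an upper bound on the count of valid $\sigma$ yields an upper bound on $|\cH\cap\upp{T}|$.

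First, I would encode the "shape" of each component: for each component $C_i$ (with $s_i$ vertices and $t_i$ edges), fix a rooted spanning tree and a DFS order $u_{i,1}=r_i,u_{i,2},\ldots,u_{i,s_i}$ so that every non-root $u_{i,j}$ has a parent $u_{i,p(j)}$. In any valid $\sigma$, the signed cyclic offset $\sigma(u_{i,j})-\sigma(u_{i,p(j)})$ lies in $\{\pm 1,\ldots,\pm k\}$, giving $2k$ choices per tree edge. To obtain the factor $(2k)^{2t}$ rather than the tighter $(2k)^{\sum_i(s_i-1)}\le (2k)^t$, I would use a looser encoding that records two offsets per edge of $C_i$ (for instance by following an Eulerian circuit in the doubled component, which has $2t_i$ edges): this gives at most $(2k)^{2t}$ shape encodings total and is easier to combine with the arrangement count below.

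Second, for any such encoding, each component occupies a cyclic arc of length $\ell_i$ with $s_i\le\ell_i\le k(s_i-1)+1$, because its vertices are connected via a chain of $T$-edges each of cyclic distance at most $k$. Treating each component as a labeled ``block'' of length $\ell_i$, I would count cyclic arrangements of $c$ labeled blocks together with $n-\sum_i\ell_i$ outside singleton slots, giving $(c+n-\sum_i\ell_i-1)!/(n-\sum_i\ell_i)!$ block placements, then assign the $|R|=n-\sum_i s_i$ non-$V(T)$ vertices to all free positions (inside arc holes and outside slots) in $|R|!$ ways. The hypothesis $t\le n/(3k)$ ensures $\sum\ell_i\le kt+c\le n$, so the arcs fit disjointly on the cycle and this counting is well-defined.

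Third, I would combine the two bounds. The per-encoding arrangement count is maximized at the tight case $\ell_i=s_i$ (no internal holes), which gives $(n-\sum_i s_i+c-1)!$. To obtain the proposition's factorial $(n-\lceil(t+(2k-1)c)/k\rceil+c-1)!$, I would use the structural lower bound $\sum_i\ell_i\ge\sum_i\lceil(t_i+2k-1)/k\rceil$, whose right-hand side is at least $\lceil(t+(2k-1)c)/k\rceil$ by superadditivity of the ceiling. This per-component compression $\lceil(t_i+2k-1)/k\rceil$ reflects that component $i$ uses at least $2$ endpoint positions on the cycle plus $\lceil(t_i-1)/k\rceil$ additional positions to accommodate its edges.

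The main obstacle I anticipate is the bookkeeping in the third step: I expect the precise form $\lceil(t+(2k-1)c)/k\rceil$ to require care in how one sums over shape encodings, since different encodings lead to different arc lengths $\ell_i$, and the factorial interacts with these nontrivially. The generous prefactor $(2k)^{2t}$ (versus the natural $(2k)^t$ from tree-edge offsets alone) should absorb any slack from counting each edge twice, which is why the Eulerian-style encoding is convenient, and the hypothesis $t\le n/(3k)$ is what keeps the block-arrangement calculation honest.
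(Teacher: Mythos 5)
Your high-level plan (count cyclic orderings, encode component ``shapes'' via $\pm k$ offsets along spanning-tree edges, then multiply by an arrangement count) matches the paper's strategy in spirit, and your structural inequality $s_i\ge\lceil(t_i+2k-1)/k\rceil$ summed with ceiling superadditivity is exactly how the factorial $\bigl(n-\lceil(t+(2k-1)c)/k\rceil+c-1\bigr)!$ arises. But there is a genuine gap in the middle step, and it is not one that bookkeeping can fix.

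The block-arrangement count assumes each component $T_i$ occupies an arc of length $\ell_i$ and that these $c$ arcs tile the cycle disjointly alongside the singleton slots. This is false in general: two $T$-components can be \emph{nested or interleaved} on the final cyclic ordering. For instance, with $k\ge 4$, one component could be the single edge between positions $1$ and $5$ and another the single edge between positions $3$ and $4$; their convex hulls overlap. The hypothesis $t\le n/(3k)$ does rule out any single component wrapping all the way around the cycle (which is what the paper needs to get $t_j\le k\,s_j-(2k-1)$), but it says nothing about different components' arcs being disjoint. Your block count therefore omits all interleaved configurations, so it yields a \emph{lower} bound on $|\cH\cap\upp{T}|$, not an upper bound, and the argument breaks.

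The paper sidesteps this by a different decomposition: it first writes down a cyclic permutation of the $c$ roots together with the $n-v$ non-$T$ vertices (giving $(n-v+c-1)!\le (n-\lceil(t+(2k-1)c)/k\rceil+c-1)!$ via $v\ge\lceil(t+(2k-1)c)/k\rceil$), and only then inserts the remaining $T$-vertices one at a time in an order $\prec_j$ where each new vertex has an already-placed $T$-neighbor; each insertion therefore has at most $2k$ valid positions, giving $(2k)^v\le(2k)^{2t}$. Interleaving is automatically allowed because insertions of a later component can land between earlier components' vertices (or between a root and a non-$T$ vertex), so no disjointness of arcs is ever assumed. As a minor point, your Eulerian-circuit detour to get $(2k)^{2t}$ instead of $(2k)^{v-c}$ is unnecessary slack: the paper just uses $(2k)^v\le(2k)^{2t}$ since $v\le 2t$.
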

\begin{proof}
Let $T_1,\ldots,T_c$ be the components of the subgraph induced by the edges $T$ and let $v = |V (T)|$ where ($V(A), E(A)$) is the set of (vertices, edges) used by a subgraph $A$. The upper bound on $t$ implies that no $T_j$ can “wrap around,” and so $|E(T_j)| \leq k|V (T_j)|-(2k-1)$ for each $j$ and so 
\beq{3}{
t \leq kv - (2k-1)c.
}
We designate a root vertex $v_j$ for each $T_j$ and order $V(T_j)$ by some order $\prec_j$ that begins with $v_j$ and in which each $v\neq v_j$ appears later than at least one of its neighbors. We may then bound $|\cH\cap\upp{T}|$ as follows. To specify an $S\in\cH$ containing $T$, we first specify a cyclic permutation of $\set{v_1 ,\ldots,v_c} \bigcup ([n]\setminus V (T))$. By \eqref{3}, the number of ways to do this (namely, $(n - v + c - 1)!)$ is at most  $\brac{n-\rdup{\frac{t+(2k-1)c}{k}}+c-1}!$. We then extend to a full cyclic ordering of $[n]$ (thus determining $T$) by inserting, for $j = 1,\ldots,c$, the vertices of $V (T_j ) \setminus \set{v_j}$ in the order $\prec_j$. This allows at most $2k$ places to insert each vertex (since one of its neighbours has been inserted before it and the edge joining them must belong to $T$), so the number of possibilities here is at most $(2k)^v \leq (2k)^{2t}$, and the proposition follows.
\end{proof}
\begin{Proposition}\label{prop2}
For $T\subseteq S\in\cH,|T|=t\leq n/3k$, the number of subgraphs of $T$ with $c$ components is at most $(4ke)^t\binom{2t}{c}$.
\end{Proposition}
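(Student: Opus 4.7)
The plan is to encode each $t$-edge, $c$-component subgraph $T$ of $S$ via a canonical depth-first traversal, exploiting the fact that $S$, being the $k$-th power of a Hamilton cycle, has every vertex of $S$-degree exactly $2k$. I would first order the components $T_1,\ldots,T_c$ canonically by their leftmost vertex on the underlying Hamilton cycle (the hypothesis $t\le n/3k$, as in Proposition \ref{prop1}, ensures components do not wrap around and so a ``leftmost vertex'' is well defined), root each $T_j$ at that vertex, and write $t_j=|E(T_j)|$, $v_j=|V(T_j)|$, so that $\sum_j t_j=t$ and $\sum_j v_j\le 2t$.

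For each component I would perform a DFS from its root, producing a walk that crosses each edge of $T_j$ at most twice. At each step one specifies (i) whether the edge just explored is a tree edge or a back edge, contributing a factor of $2$, and (ii) the endpoint of the edge inside $S$, contributing a factor of at most $2k$ since the $S$-degree is $2k$. This yields at most $(4k)^{t_j}$ encodings per component, hence at most $(4k)^t$ across all components. Interleaving the $c$ traversals and marking the $c$ positions among the $2t$ combined half-edge slots where a new component begins contributes the $\binom{2t}{c}$ factor. The residual $e^t$ factor is absorbed as slack coming from Stirling-type bounds when converting ordered encodings back to unordered edge subsets (for instance when accounting for the composition $t_1+\cdots+t_c=t$ and the root choices within each component).

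The main obstacle is making the constants land exactly at $(4ke)^t$ rather than, say, $(4k^2)^t$, which a naive Euler tour (charging $(2k)^{2t_j}$ per component) would give. To avoid this inflation one must arrange matters so that each edge is charged only once: either as a tree edge that reveals a new vertex (at most $2k$ neighbor choices) or as a back edge between already-revealed vertices (also at most $2k$ neighbor choices). Verifying that distinct $T$'s yield distinct codewords---given the canonical roots and the marked component-start positions---is routine but relies crucially on the $t\le n/3k$ hypothesis, which guarantees each component lies in a genuine arc of the cycle and thus that the canonical ordering by leftmost vertex is unambiguous.
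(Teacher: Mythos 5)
The paper's proof and yours diverge fundamentally, and yours has a gap at exactly the point you flag as the ``main obstacle.'' The paper does not encode the components via a DFS walk at all. It instead factorizes the count as: (a) choose the $c$ roots from the at most $2t$ vertices of $T$, giving $\binom{2t}{c}$; (b) choose the edge-count composition $t_1+\cdots+t_c$, bounded by $\sum_{u\le t}\binom{u-1}{c-1}\le 2^t$; and (c) for each component, cite Knuth's enumeration that the infinite $\Delta$-regular tree has at most $(\Delta e)^{t-1}$ rooted subtrees, applied with $\Delta=2k$ to get $(2ke)^{t_i}$ per component. Multiplying gives $2^t\cdot(2ke)^t\cdot\binom{2t}{c}=(4ke)^t\binom{2t}{c}$. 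Note in particular that the factor $e^t$ is \emph{not} ``slack coming from Stirling-type bounds''; it is intrinsic to the Knuth rooted-subtree count and cannot be freely spent.

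The gap in your DFS encoding is that you never actually force each edge to be charged once. A depth-first traversal of a connected graph with $t_j$ edges visits $2t_j$ directed edge-steps (each tree edge is traversed twice, once forward and once on backtrack), and the backtrack moves carry information: you must know, on returning to a vertex, which unexplored incident edge to take next, or whether to pop further up the stack. Your step-by-step encoding --- a tree/back bit plus one of at most $2k$ $S$-neighbors --- only specifies the forward edge discoveries and says nothing about when and how far to backtrack. Making ``each edge charged only once'' is precisely the nontrivial content that the Knuth rooted-subtree bound packages for you, and you state that ``one must arrange matters'' to this effect without arranging it. As it stands, the most you can honestly claim from an Euler-tour-style walk is $(4k)^{2t_j}=(4k^2)^{t_j}$ per component (as you note yourself), which does not yield the stated Proposition. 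Separately, your interpretation of $\binom{2t}{c}$ as marking component-start positions among interleaved half-edge slots is also not what the paper does; there it simply counts choices of the $c$ root vertices out of the $\le 2t$ vertices of $T$.
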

\begin{proof}
To specify a subgraph $T$ of $S$ we proceed as follows. We first choose root vertices $v_1 ,\ldots,v_c$ for the components, say $T_1,\ldots,T_c$, of $T$, the number of possibilities for this being at most $\binom{2t}{c}$.
We then choose the sizes, say $t_1,\ldots,t_c$, of $T_1,\ldots,T_c$; here the number of possibilities is at most $\sum_{u=c}^t\binom{u-1}{c-1}$. {(For $u<t$, the summand is the number of positive integer solutions to $x_1+\cdots+x_c=u$.)} Finally, we specify for each $i$ a connected $S_i$ of size $t_i$ rooted at $v_i$ in at most $\prod_{i=1}^c(2ke)^{t_i}$ ways. This comes for the fact that there are at most $(\D e)^{t-1}$ rooted subtrees of the infinite $\D$-regular tree, see Knuth \cite{Kn}, p396, Ex11. Combining these estimates (with  $\sum_{u=c}^t\binom{u-1}{c-1}=\binom{t}{c}<2^t$) yields the proposition.
\end{proof}
It follows from these two propositions that if $S\in\cH$ and $1\leq t\leq n/3k$ then
\begin{align*}
{\frac{f_{t,S}}{|\cH|}}&\leq \sum_{c=1}^{t}(2k)^{2t}\brac{n-\rdup{\frac{t+(2k-1)c}{k}}+c-1}!\times (4ke)^t\binom{2t}{c}\times \frac{1}{(n-1)!}\\
&\leq 2\sum_{c=1}^{t}(16k^3e)^t \binom{2t}{c}\bfrac{e}{n-1}^{\rdup{\frac{t+(2k-1)c}{k}}-c} \bfrac{n-\rdup{\frac{t+(2k-1)c}{k}}+c-1}{n-1}^{n-\rdup{\frac{t+(2k-1)c}{k}}+c-1}\\
&\leq e^{O(t)}  \sum_{c=1}^{t}n^{-\rdup{\frac{t+(2k-1)c}{k}}+c}\\
&=O\bfrac{O(1)}{n^{1/k}}^t.
\end{align*}
So the $k$th power of a Hamiltonian cycle satisfies the conditions with $r=kn$, $\kappa=O(n^{1/k})$, $\a=1/3k$.

\section{Final thoughts}
Theorem \ref{power} could possibly be improved in at least two ways. First, we could try to replace $\e$ by $o(1)$. For specific examples such as the square of a Hamilton cycle, this can probably be done using the ideas of Friedgut \cite{F05}, as suggested in \cite{KNP2020}. Also, we can try to replace $\e_1$ by zero, which would require an improvement to the proof in Section \ref{fin} that we do not have at the moment.

\end{document}